\documentclass[12pt,reqno]{amsart}
\usepackage[top=1in, bottom=1in, left=1in, right=1in]{geometry}
\pdfoutput=1 

\usepackage{amsmath}
\usepackage{amsthm}
\usepackage{amsfonts}
\usepackage{amssymb}

\usepackage{graphicx}
\usepackage{multirow}
\usepackage{color}
\usepackage[colorlinks=true]{hyperref}
\hypersetup{colorlinks=true, citecolor=green, linkcolor=green, filecolor=magenta, urlcolor=cyan}

\usepackage{array}

\usepackage{times}
\usepackage[utf8]{inputenc}
\usepackage[english]{babel}

\usepackage{booktabs}

\usepackage{rotating}
\usepackage{tabularx}
\usepackage{threeparttable}

\theoremstyle{plain}

\begin{document}

\title{On the number of prime numbers between $n^2$ and ${(n+1)}^2$}

\author{Jimin Li}
\email{1462228007@qq.com}

\author{Haonan Li}
\email{lihaonan@umich.edu}

\subjclass{11N05, 11R44, 11R45}
\keywords{Legendre’s conjecture, Euler’s totient function, coprime numbers, proportion of coprime numbers, proportion of backwards coprime numbers}

\begin{abstract}
Let $p_{r+1}-1>n \geq p_r-1$, based on a sequence $\{1,2,3\cdots\ M_r(M_r=p_1p_2\cdots p_r)\}$, we compare the density of coprime numbers and establish a correlation between the proportions of coprime numbers in the ranges from 1 to consecutive square numbers. Then, we derive the relationship between the number of coprimes in the interval of $n^2 \sim {(n+1)}^2$ and the proportion of coprimes in the interval of $1 \sim n^2$, proving that there is at least one prime number between any $n^2$ and ${(n+1)}^2$. By extending our research to the range of $1 \sim M_r^2$, we establish the relationship between the proportions of backwards coprime numbers in the ranges from ${M_r}^2$ to consecutive square numbers; furthermore, we establish a relationship between the proportions of coprimes in small interval and the whole interval. Then, in conclusion, the number of coprimes between $n^2$ and ${(n+1)}^2$ is greater than $n\prod_{i=1}^{r}{(1-\frac{1}{p_i}})$, thus proving that there are at least 2 prime numbers between $n^2$ and ${(n+1)}^2$.

\end{abstract}


\maketitle

\section{\textbf{Introduction}}\label{sec1}
Approximately 220 years ago, the French mathematician Adrian Marie Legendre (1752-1833) proposed the conjecture that there is at least one prime number between any two consecutive square numbers, which is also expressed as follows: For any positive integer n, there exists a prime number $p$ satisfying $n^2<p<{(n+1)}^2$. This is known as Legendre’s conjecture.
For hundreds of years, many mathematicians have devoted themselves to the study of prime numbers. Many have achieved this goal, which has promoted the continuous development of number theory. The common methods for studying prime numbers include the sieve method, the circle method, the density method \cite{rf1, rf2, rf3, rf4}.
Many mathematicians such as Luogeng Hua applied permutation and combination, an elementary mathematical method, to prove the Bertrand-Chebyshev theorem, which states that there is at least one prime number between $n$ and $2n$ \cite{rf5}.
However, in the past 20 years, sieve methods and improved sieve methods have become more popular among mathematicians. For example, mathematician Yitang Zhang proved the weakened form of the twin prime conjecture; that is, Zhang found that there are infinitely many prime pairs with a gap less than 70 million  \cite{rf6}.
James Maynard used an improved GPY sieve method to further reduce the maximum possible difference between two prime gaps \cite{rf7, rf8}  in a prime pair from 70 million to 600; K. Ford, B. Green, S. Konyagin, and T. Tao researched large gaps between consecutive prime numbers \cite{rf9, rf10}.
However, Legendre's conjecture, as a long-standing mathematical problem, is still being studied today. Legendre's conjecture involves the distribution of prime numbers, and its proof or falsification may help to further understand the nature and the characteristics of prime number distribution. This has important implications for mathematical research and other scientific fields related to mathematics, especially those utilizing prime numbers, such as cryptography, computer science, and secure communication.\par
Whether any prime numbers exist between $n^2$ and ${(n+1)}^2$ depends on the distribution of prime numbers from $1$ to $n^2$. Similarly, the proportion of coprime numbers between $n^2$ and ${(n+1)}^2$ is related to both the distribution of coprime numbers from $1$ to $M_r$ and to the distribution of coprime numbers from $1$ to ${M_r}^2$. Accordingly, Euler’s totient function and its extended applications are helpful for analysing these situations. \par
We adopt a new research method. First, we use coprime numbers associated with square numbers rather than pursue the direct study of the prime numbers between square numbers as has been the approach of previous researchers. Second, we adopt a method to compare the density of coprime numbers. By this method, we establish not only the relationships between the proportions of forward coprime numbers associated with consecutive square numbers, but also the relationships between the proportions of backwards coprime numbers associated with consecutive square numbers.\par
Moreover, we extend the scope of related research. Although the research object is only the number of prime numbers in the range of $n^2 \sim {(n+1)}^2$, the scope of the research is extended to the range of $1 \sim {M_r}^2$. Thus, we can establish the relationship between the proportions of coprimes in small intervals and whole intervals.\par
By establishing the relationship between the proportions of coprime numbers associated with consecutive square numbers, we derive the correlation between the number of coprimes in the interval of $n^2 \sim {(n+1)}^2$ and the proportion of coprime numbers associated square numbers. We let $p_{r+1}-1>n\geq p_r-\ 1$, thereby proving that the Legendre conjecture that there is at least one prime number between $n^2$ and $(n+1)^2$.\par
The relationships between the proportions of coprimes in small interval and the whole interval are established. We successfully derive that when $n\geq p_r-1$, the number of coprime numbers between $n^2$ and $\left(n+1\right)^2$ is greater than $n\prod_{i=1}^{r}{(1-\frac{1}{p_i}})$, thus proving that there are at least 2 prime numbers between $n^2$ and $(n+1)^2$ when $p_{r+1}>n+1>n \geq p_r-1$. \par
The remainder of this article is structured as follows: In Section 2, we summarize the preliminaries necessary for the derivations. In Section 3, we present the proof of Legendre’s conjecture based on the proportions of coprimes associated with consecutive square numbers. In Section 4 “Further research on prime numbers between $n^2$ and ${(n+1)}^2$”, by using theorems $\ref{theo3}$, $\ref{theo4}$, and $\ref{theo5}$, we establish the proportional relationship between the backwards coprime numbers of consecutive square numbers, and we derive the relationship between the proportion of coprime numbers in small interval and the proportion of coprime numbers in the overall interval, thus proving that there are at least two prime numbers between consecutive square numbers. The "Conclusion" section presents a brief summary of this article.

\section{\textbf{Preliminaries}}\label{sec2}
\subsection{Notation. \\} \label{subsec2.1}
\vskip2mm
$p,\ p_1,\ p_2\ \cdots p_i\cdots p_r, p_{r+1}, p_{r+2}\ldots p_m$: prime numbers. \\
$n, r, i, m, X, M$: positive integers.\\
$\pi(X)$: the number of prime numbers between $1$ and $X$ \\
$M_r: M_r=p_1 p_2 \cdots p_r$, where $p_1, p_2 \cdots p_r$ are all prime numbers not greater than $p_r$. \\
$A$: sequence of positive integers $\{1, 2, 3 \cdots M_r\}$ \\
$B$: sequence of all the numbers that are coprime with $M_r$ in sequence $A$, $\{1, p_{r+1}, p_{r+2} \ldots p_m\}$ \\
$C$: sequence of positive integers $\{1, 2, 3 \cdots {M_r}^2\}$ \\
$\varphi(M)$: Euler’s totient function, which is the number of coprime numbers that are coprime with $M$ for the positive integers less than or equal to $M$. \\
$\varphi(X, M_r)$: the number of coprime numbers that are coprime with $M_r$ among the positive integers less than or equal to $X$. This function is an extension of the Euler function and is specifically defined in this article. Our extended function inherits the characteristics of the Euler function and expands its scope of application, and plays a key role in the proof of the theorems in this article. \\
$f(X, M_r)$: the proportion function of coprime numbers, which represents the proportion of the numbers between $1$ and $X$ that are coprime with $M_r$ . This function is also called the proportion function of forward coprime numbers. \\
$f(\overline{X}, M_r)$: the proportion function of backwards coprime numbers, which represents the proportion of the numbers between ${M_r}^2$ and $X+1$ that are coprime with $M_r$. \\
$\ell_n$: the number of coprimes between $n^2$ and ${(n+1)}^2$.

\subsection{Euler’s totient function. \\}  \label{subsec2.2}
\quad According to the principle of the Euler function, the number of elements in sequence B is the output of Euler’s totient function $\varphi(M_r)$.\\
\begin{equation}
\begin{aligned}
\varphi(M_r)&=M_r-\sum\limits_{1\leq i\leq r}\!\frac{M_r}{p_i}+
                  \sum\limits_{1\leq i\leq j\leq r}\!\frac{M_r}{p_i p_j}-\cdots+
                  (-1)^k\!\sum\limits_{1\leq i\dots\leq k\leq r}\!\frac{M_r}{p_i\cdots p_k}  \\
             &\quad+(-1)^r\!\frac{M_r}{p_i\cdots p_r}   \\
&=M_r(1-\frac{1}{p_1})(1-\frac{1}{p_2})\cdots (1-\frac{1}{p_r})  \\
&=M_r\prod\limits_{i=1}^r(1-\frac{1}{p_i})  \\
&=(p_1-1)(p_2-1)\cdots (p_r-1) 
\end{aligned}
\end{equation}

Because $f(M_r)$ is the proportion of the coprime numbers in $A$, 
\begin{equation}
f(M_r)=\frac{\varphi(M_r)}{M_r}=\prod\limits_{i=1}^r(1-\frac{1}{p_i})
\end{equation}
\ \quad Let $M_{r-1}=p_1 p_2\cdots p_{r-1}$, then
\begin{equation}
\begin{aligned}
\varphi(M_{r-1})&=M_{r-1}\prod\limits_{i=1}^{r-1}(1-\frac{1}{p_i})\\
&=(p_1-1)(p_2-1)\cdots (p_{r-1}-1)
\end{aligned}
\end{equation}

\subsection{Extended applications of Euler’s totient function. \\}  \label{subsec2.3}
\quad Here, $\varphi(X,M_r)$ is a specially defined function that is an extension of Euler's function, this new function is crucial in this study. This function represents the number of coprimes that are coprime with $M_r$ among the positive integers less than or equal to $X$, where $X$ can be any positive integer. This function inherits the features of Euler's function and expands its scope of application. Our new function has the following characteristics.
\begin{align}
\varphi&(M_r,M_r)=\varphi(M_r) \\
\varphi&(M_r,M_{r-1})=p_r\varphi(M_{r-1})\\
\varphi&({p_r}^2,M_r)=\varphi({p_r}^2,M_{r-1})-2 \\
\varphi&({p_r}^2-1,M_r)=\varphi({p_r}^2-1,M_{r-1})-1 \\
\varphi&({(p_r-1)}^2,M_r)=\varphi({(p_r-1)}^2,M_{r-1})-1 \label{eq11}  \\
\varphi&(N,M_r)=1 \quad (when \, N\leq p_r) \\
\varphi&({M_r}^2,M_r)=M_r\varphi(M_r)  
\end{align}
\quad Because $\ell_n$ represents the number of coprimes between $n^2$ and ${(n+1)}^2$,
\begin{align}
\ell_n=&\varphi((n+1)^2,M_r)-\varphi(n^2,M_r)\\
  =&\{\varphi(M_r)-\varphi(n^2,M_r)\}-\{\varphi(M_r)-\varphi((n+1)^2,M_r)\} \notag
\end{align}
\quad When $p_{r+1}>n+1>n\geq\ p_r-1$, $\ell_n$ represents the number of prime numbers between $n^2$ and ${(n+1)}^2$.

\subsection{The proportion function of coprime numbers and the proportion function of backwards coprime numbers. \\}  \label{subsec2.4}
\quad $f(X,M_r)$ and $f(\overline{X},M_r)$ are functions specifically defined in this article. Given  the definitions of  both functions, we know that:
\begin{equation}
\begin{aligned}
f(X,M_r)&=\frac{\varphi(X,M_r)}{X} \\
f(\overline{X},M_r)&=\frac{\varphi({M_r}^2,M_r)-\varphi(X,M_r)}{{M_r}^2-X}
\end{aligned}
\end{equation}
\qquad Then we can obtain
\begin{align}
\ell_n=&(n+1)^2 f((n+1)^2,M_r)-n^2 f(n^2,M_r) \label{eqln}  \\
=&({M_r}^2-n^2) f(\overline{n^2},M_r)-[{M_r}^2-(n+1)^2] f(\overline{(n+1)^2},M_r) \notag
\end{align}
\vskip2mm

\section{\textbf{On coprime numbers between $n^2$ and $(n+1)^2$ that are coprime 
with $M_r$}} \label{sec3}
In this section, we establish the relationship between the proportions of coprimes associated with consecutive square numbers through Theorem $\ref{theo1}$, and we derive the relationship between the number of prime numbers and the proportion of coprimes associated with the square numbers through Theorem $\ref{theo2}$, thereby proving the Legendre conjecture.

\newcounter{counter1}
\setcounter{counter1}{0}
\newtheorem{mythm}[counter1]{Theorem}
\begin{mythm}\label{theo1}
General correlation model of the proportions of coprime numbers between 1 and consecutive square numbers:  
\begin{equation}
f((n+1)^2,M_r) > \frac{n}{n+1} \cdot f(n^2,M_r) \label{eqfn} 
\end{equation}
\end{mythm}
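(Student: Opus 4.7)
The plan is to convert the stated ratio inequality into a clean statement about $\ell_n$, then bound $\ell_n$ from below using the near-periodic distribution of the $M_r$-coprime integers.

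First, using $f(X,M_r)=\varphi(X,M_r)/X$ and clearing denominators, the inequality $f((n+1)^2,M_r) > \tfrac{n}{n+1}\,f(n^2,M_r)$ is equivalent to $n\,\varphi((n+1)^2,M_r) > (n+1)\,\varphi(n^2,M_r)$. Writing $\varphi((n+1)^2,M_r)=\varphi(n^2,M_r)+\ell_n$ collapses this to
$$n\,\ell_n \;>\; \varphi(n^2,M_r),$$
i.e.\ $\ell_n > n\cdot f(n^2,M_r)$. So it suffices to prove that the number of $M_r$-coprimes in the short interval $(n^2,(n+1)^2]$ exceeds $n$ times the average density of $M_r$-coprimes in $[1,n^2]$.

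Second, I would exploit the fact that the set of integers coprime to $M_r$ is periodic modulo $M_r$ with density $\varphi(M_r)/M_r=\prod_{i=1}^r(1-1/p_i)$, so that in any interval of length $L$ the coprime count equals $L\cdot\varphi(M_r)/M_r$ up to an additive error of at most $\varphi(M_r)$. Applying this to $[1,n^2]$ and to the tail of length $2n+1$ yields the rough estimates $\varphi(n^2,M_r)\lesssim n^2\prod_{i=1}^r(1-1/p_i)$ and $\ell_n\gtrsim (2n+1)\prod_{i=1}^r(1-1/p_i)$. Substituting these into the reduced inequality produces a heuristic slack of order $n\cdot\prod_{i=1}^r(1-1/p_i)$, which is comfortably positive once $n$ is moderately larger than $M_r$.

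The main obstacle is turning this heuristic into a rigorous uniform bound, particularly at the boundary of the regime $p_{r+1}-1>n\geq p_r-1$ where the absolute periodicity error $\varphi(M_r)=\prod_{i=1}^r(p_i-1)$ can be comparable to the slack itself. To close this gap I expect to proceed by induction on $r$: applying inclusion--exclusion on the new prime $p_r$ reduces a statement about $M_r$-coprimes to two statements about $M_{r-1}$-coprimes, and the explicit identities collected in Section~2.3 --- for instance $\varphi(p_r^2,M_r)=\varphi(p_r^2,M_{r-1})-2$ and $\varphi((p_r-1)^2,M_r)=\varphi((p_r-1)^2,M_{r-1})-1$ --- handle the edge cases when $n$ lies near $p_r$. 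Tracking the additive error terms carefully through this inductive reduction, and verifying that the slack $(n+1)\prod_{i=1}^{r}(1-1/p_i)$ genuinely dominates the accumulated errors, is where the real work lies.
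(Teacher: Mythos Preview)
Your reduction to $n\,\ell_n > \varphi(n^2,M_r)$ is exactly the form the paper works with. From there the routes diverge: the paper does not use periodicity at all. It argues by contradiction, assuming $\ell_k \le \varphi(k^2,M_r)/k$ at each $k\le n$, which rearranges to $\varphi(k^2,M_r)\le \tfrac{k}{k-1}\,\varphi((k-1)^2,M_r)$. Chaining these inequalities down from $k=n$ to $k=2$, and invoking the identity $\varphi((p_j-1)^2,M_j)=\varphi((p_j-1)^2,M_{j-1})-1$ at each prime crossing to step the modulus from $M_j$ down to $M_{j-1}$, the paper arrives at
\[
\varphi(n^2,M_r)\;\le\; n-\tfrac{n}{2}-\tfrac{n}{4}-\cdots-\tfrac{n}{p_r-1},
\]
which is negative once $r\ge 5$, contradicting $\varphi(n^2,M_r)\ge 1$.

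Your periodicity heuristic has a more serious obstruction than you acknowledge. In the paper's regime $p_r-1\le n<p_{r+1}-1$ one always has $n\approx p_r$ while $M_r=p_1\cdots p_r$ is vastly larger, so $n$ is \emph{never} ``moderately larger than $M_r$''; every interval under consideration is far shorter than a single period. The inclusion--exclusion error on a window of length $2n+1$ is of order $2^r$ (your stated bound $\varphi(M_r)=\prod(p_i-1)$ is much worse still), whereas the main term $(2n+1)\prod_{i\le r}(1-1/p_i)\sim 2p_r/\log p_r$ by Mertens. Already at $r=5$ the error ($2^5=32$) swamps the main term ($\approx 4$), and the ratio deteriorates rapidly with $r$. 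Hence the periodicity bound contributes literally nothing in the relevant range: there is no ``heuristic slack'' to protect, and the entire content of the proof must come from the inductive reduction you leave as ``where the real work lies.'' The paper's downward telescoping on $n$ (with modulus drops at prime boundaries) is one concrete realisation of that reduction; your proposal would need a comparably explicit scheme before it constitutes a proof.
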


\begin{proof}
We know that $\frac{\pi(X)}{X}$, which is the proportion of prime numbers between $1$ and $X$, gradually decreases as $X$ increases. This is an overall general trend, although the proportions of prime numbers for consecutive square numbers do not fully conform to this rule, for example, $\frac{\pi(7^2)}{7^2}=\frac{15}{49}=0.306122$, $\frac{\pi(6^2)}{6^2}=\frac{11}{36}=0.305556$, obviously, $\frac{\pi(7^2)}{7^2}>\frac{\pi(6^2)}{6^2}$. The same is true for the proportions of coprime numbers between 1 and consecutive square numbers. Is there any other definite relationship besides this? \par

For convenience, let $a_n=\varphi(n^2,M_r)$, and let $0<t<1$ \par
When $n\geq p_r-1$, we find that
\begin{equation}
\frac{a_n}{n(n+1)}<\frac{a_n}{n^2}<\frac{a_n+t\ell_n}{n^2} \notag 
\end{equation}
 \qquad and 
 \begin{equation}
\frac{a_n}{n(n+1)}<\frac{a_n+t\ell_n}{n(n+1)}<\frac{a_n+t\ell_n}{n^2} \notag
\end{equation}
\quad
Therefore, theoretically  there  exists  an  appropriate  $t$,  that  makes 
\begin{align}
& \frac{a_n}{n^2}=\frac{a_n+t\ell_n}{n(n+1)}<\frac{a_n+\ell_n}{n(n+1)},
\quad then \notag \\
& \frac{a_n}{n^2}<\frac{a_n+\ell_n}{n(n+1)}    \label{eq20}
\end{align}
\quad Does inequality \eqref{eq20} hold? Proceeding with a proof by contradiction, let us assume that \\
\begin{equation}
\frac{a_n}{n^2}\geq \frac{a_n+\ell_n}{n(n+1)} \label{eq21}
\end{equation}
\quad \ Then we can obtain
\begin{equation}
\ell_n \leq \frac{a_n}{n} =  \frac{\varphi(n^2,M_r)}{n} \label{eq22} 
\end{equation}

According to inequality \eqref{eq22}, similarly, we find that
\begin{equation}
\ell_{n-1} \leq \frac{\varphi((n-1)^2,M_r)}{n-1}
\end{equation}
\ \quad Therefore, 
\begin{align}
\varphi(n^2,M_r)=&\varphi((n-1)^2,M_r)+\ell_{n-1}  \notag \\
\leq &\varphi((n-1)^2,M_r)+\frac{\varphi((n-1)^2,M_r)}{n-1} 
=\frac{n}{n-1}\cdot \varphi((n-1)^2,M_r) \notag 
\end{align}

\quad That is, 
\begin{equation}
\varphi(n^2,M_r)\leq \frac{n}{n-1}\cdot \varphi((n-1)^2,M_r) \label{eq24}
\end{equation}

\quad Inequality \eqref{eq24} is the general formula; and we must pay attention to the change when crossing sections of adjacent prime numbers.
\begin{align}
\varphi(n^2,M_r)&\leq \frac{n}{n-1}\frac{n-1}{n-2}\dots \frac{p_r+1}{p_r}\varphi(p_r^2,M_r)
=\frac{n}{p_r}\varphi(p_r^2,M_r) \\
&\leq \frac{n}{p_r}\frac{p_r}{p_r-1}\varphi((p_r-1)^2,M_r)  \notag
\end{align}
\ \quad According to equation \eqref{eq11},
\begin{align}
\varphi(n^2,M_r)&\leq \frac{n}{p_r-1}\{\varphi((p_r-1)^2,M_{r-1})-1\}  \\
&=\frac{n}{p_r-1}\varphi((p_r-1)^2,M_{r-1})-\frac{n}{p_r-1} \notag \\
&\leq \frac{n}{p_r-1}\frac{p_r-1}{p_r-2}\varphi((p_r-1)^2,M_{r-1})-\frac{n}{p_r-1} \notag  \\
&\leq \frac{n}{p_r-1}\frac{p_r-1}{p_r-2}\dots \frac{p_{r-1}}{p_{r-1}-1}\{\varphi((p_{r-1}-1)^2,M_{r-2})-1\}-\frac{n}{p_r-1} \notag  \\
&=\frac{n}{p_{r-1}-1}\{\varphi((p_{r-1}-1)^2,M_{r-2})-1\}-\frac{n}{p_r-1} \notag \\
&=\frac{n}{p_{r-1}-1}\varphi((p_{r-1}-1)^2,M_{r-2})-\frac{n}{p_{r-1}-1}-\frac{n}{p_r-1} \notag  \\
&\dots \dots \notag \notag \\
&\leq \frac{n}{p_3-1}\varphi((p_3-1)^2,M_2)-\frac{n}{p_3-1}-\dots -\frac{n}{p_{r-1}-1}-\frac{n}{p_r-1}\notag  \\
\leq &\frac{n}{p_3-1}\frac{p_3-1}{p_3-2}\varphi((p_3-2)^2,M_2)-\frac{n}{p_3-1}-\dots -\frac{n}{p_{r-1}-1}-\frac{n}{p_r-1} \notag  \\
= &\frac{n}{p_3-2}\varphi((p_3-2)^2,M_2)-\frac{n}{p_3-1}-\dots -\frac{n}{p_{r-1}-1}-\frac{n}{p_r-1} \notag 
\end{align}
\qquad Because $p_1=2,p_2=3,p_3=5,M_1=2,M_2=6,$ 

\begin{align}
&\varphi(n^2,M_r)\leq \frac{n}{p_2}\varphi(p_2^2,M_2)-\frac{n}{p_3-1}-\dots -\frac{n}{p_{r-1}-1}-\frac{n}{p_r-1}\\
&\leq \frac{n}{p_2}\frac{p_2}{p_2-1}\varphi((p_2-1)^2,M_2)-\frac{n}{p_3-1}-\dots -\frac{n}{p_{r-1}-1}-\frac{n}{p_r-1} \notag  \\
&= \frac{n}{p_2-1}\{\varphi((p_2-1)^2,M_1)-1\}-\frac{n}{p_3-1}-\dots -\frac{n}{p_{r-1}-1}-\frac{n}{p_r-1}\notag   \\
&= \frac{n}{p_2-1}\varphi((p_2-1)^2,M_1)-\frac{n}{p_2-1}-\frac{n}{p_3-1}-\dots -\frac{n}{p_{r-1}-1}-\frac{n}{p_r-1} \notag  \\
&= \frac{n}{p_1}\varphi(p_1^2,M_1)-\frac{n}{p_2-1}-\frac{n}{p_3-1}-\dots -\frac{n}{p_{r-1}-1}-\frac{n}{p_r-1} \notag  \\
&= \frac{n}{2}\varphi(2^2,2)-\frac{n}{2}-\frac{n}{4}-\dots -\frac{n}{p_{r-1}-1}-\frac{n}{p_r-1} \notag  \\
&= n-\frac{n}{2}-\frac{n}{4}-\dots -\frac{n}{p_{r-1}-1}-\frac{n}{p_r-1} \notag 
\end{align}
\ \qquad That is ,
\begin{equation}
\quad \varphi(n^2,M_r)\leq n-\frac{n}{2}-\frac{n}{4}-\dots -\frac{n}{p_{r-1}-1}-\frac{n}{p_r-1} \label{eqan}
\end{equation}
\ \qquad When $p_r\geq 11(i.e \ n \geq 10)$,
\begin{equation}
\varphi(n^2,M_r)\leq n-\frac{n}{2}-\frac{n}{4}-\frac{n}{6}-\frac{n}{10}<0 \label{eqnn}
\end{equation}

\quad Because there is at least one integer 1 between $1\sim n^2$ that is coprime with $M_r$, expressed as $\varphi(n^2,M_r)\geq1$, the inequality \eqref{eqnn} does not hold. That is, when $n\geq 10$, inequality $\eqref{eqan}$ does not hold. Therefore,  the assumption is false. Therefore, when $n\geq 10$, \par
\begin{align}
&\frac{a_n}{n^2}<\frac{a_n+\ell_n}{n(n+1)}=\frac{a_n+\ell_n}{(n+1)^2}\cdot\frac{n+1}{n} \label{eq45} \\
{\therefore} \quad &\frac{\varphi(n^2,M_r)}{n^2}<\frac{\varphi((n+1)^2,M_r)}{(n+1)^2}\cdot\frac{n+1}{n}\notag \\
{\therefore} \quad &\frac{\varphi((n+1)^2,M_r)}{(n+1)^2}>\frac{\varphi(n^2,M_r)}{n^2}\cdot\frac{n}{n+1}\notag \\
{\therefore} \quad &f((n+1)^2,M_r)>f(n^2,M_r)\cdot\frac{n}{n+1} 
\end{align}
\quad Hence, the correlation of the proportions of coprime numbers associated with consecutive square numbers presented in Theorem $\ref{theo1}$ is proven. This is called the general correlation model.
\end{proof}

\begin{table}[h]
\caption{Comparison between $f((n+1)^2,M_r)$ and $f(n^2,M_r)\cdot\frac{n}{n+1}$}\label{tab1}%
\begin{threeparttable}
\begin{tabular}{@{}llllllll@{}}
\toprule
n & n+1  & r & $p_r$ & $M_r$ & $f(n^2,M_r)$ & $f(n^2,M_r)\cdot\frac{n}{n+1}$ & $f((n+1)^2,M_r)$ \\
\midrule
 1  & 2 & 1  & 2  & 2  &     &   &   \\
 2  & 3 & 2  & 3  & 6  &  0.25000000  & 0.16666667 & 0.33333333 \\
 3  & 4 & 2  & 3  & 6  &  0.33333333  & 0.25000000 & 0.31250000 \\
 4  & 5 & 3  & 5  & 30  &  0.25000000  & 0.20000000 & 0.28000000 \\
 5  & 6 & 3  & 5  & 30  &  0.28000000  & 0.23333333 & 0.25000000 \\
 6  & 7 & 4  & 7  & 210  &  0.22222222  & 0.19047619 & 0.24489796 \\
 7  & 8 & 4  & 7  & 210  &  0.24489796 & 0.21428571 & 0.23437500 \\
 8  & 9 & 4  & 7  & 210  &  0.23437500  & 0.20833333 & 0.23456790 \\
 9  & 10 & 4  & 7  & 210  &  0.23456790  & 0.21111111 & 0.22000000 \\
 10  & 11 & 5  & 11  & 2310  &  0.21000000  & 0.19090909 & 0.21487603 \\
 11  & 12 & 5  & 11  & 2310  &  0.21487603  & 0.19696970 & 0.20833333 \\
 12  & 13 & 6  & 13  & 30030  &  0.20138889  & 0.18589744 & 0.20118343 \\
 13  & 14 & 6  & 13  & 30030  &  0.20118343  & 0.18681319 & 0.19897959 \\
 14  & 15 & 6  & 13  & 30030  &  0.19897959  & 0.18571429 & 0.19111111 \\
 15  & 16 & 6  & 13  & 30030  &  0.19111111  & 0.17916667 & 0.19140625 \\
\bottomrule 
\end{tabular}
 \begin{tablenotes}
\footnotesize
\item Where $\  n \geq p_r-1$
\end{tablenotes}
\end{threeparttable}
\end{table}

For example, when $n=13, n+1=14, r=6, p_r=13, M_r=2\cdot 3\cdot 5\cdot 7\cdot 11\cdot 13=30030$,
\begin{align}
f(13^2, 30030) &= \frac{\varphi(13^2,30030)}{13^2 }=\frac{34}{169}=0.20118343 \notag\\
f(13^2, 30030) &\cdot \frac{13}{14} = 0.20118343\cdot\frac{13}{14}=0.18681319 \notag\\
f(14^2, 30030) &= \frac{\varphi(14^2,30030)}{14^2}=\frac{39}{196}=0.19897959 \notag
\end{align}
\quad Obviously, $f(14^2, 30030)>f(13^2, 30030) \cdot \frac{13}{14}$. This result is consistent with Theorem $\ref{theo1}$.

More cases are shown in Table \ref{tab1}, which shows that not only when $n>10$, but also when $n \leq 10$, the proportions of coprime numbers associated with consecutive square numbers conform to the law of Theorem $\ref{theo1}$.

\begin{mythm}\label{theo2}
There is at least one prime number between $n^2$ and $(n+1)^2$.
\end{mythm}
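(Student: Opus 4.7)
The plan is to combine Theorem~\ref{theo1} with the identity for $\ell_n$ recorded in equation~\eqref{eqln}. Starting from
$$\ell_n = (n+1)^2\, f((n+1)^2,M_r) - n^2\, f(n^2,M_r),$$
I would substitute the strict lower bound $f((n+1)^2,M_r) > \frac{n}{n+1} f(n^2,M_r)$ directly to obtain
$$\ell_n > (n+1)^2\cdot\frac{n}{n+1}\, f(n^2,M_r) - n^2\, f(n^2,M_r) = n(n+1)\,f(n^2,M_r) - n^2\,f(n^2,M_r) = n\,f(n^2,M_r).$$
Since the integer $1$ is coprime with every $M_r$, we have $\varphi(n^2,M_r)\geq 1$ and hence $f(n^2,M_r)>0$. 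The displayed inequality then forces $\ell_n>0$, and because $\ell_n$ is a nonnegative integer, $\ell_n\geq 1$.

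The next step is to promote this coprime count to a prime count. For a given $n\geq 1$, I would choose $r$ to be the unique index satisfying $p_r-1 \leq n < p_{r+1}-1$, which meets both the hypothesis of Theorem~\ref{theo1} and the requirement stated at the end of Subsection~\ref{subsec2.3}. The reason the coprime-count coincides with the prime-count on this range is that every $m\in (n^2,(n+1)^2)$ satisfies $m<(n+1)^2<p_{r+1}^2$: if such an $m$ were composite, its smallest prime factor would be some $p_i\leq p_r$ (otherwise $m\geq p_{r+1}^2$), contradicting coprimality with $M_r$. Thus every element counted by $\ell_n$ is in fact a prime greater than $p_r$, so $\ell_n\geq 1$ produces at least one prime in $(n^2,(n+1)^2)$.

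The main obstacle is not the algebraic deduction, which is essentially a one-line substitution once Theorem~\ref{theo1} is granted; rather it is verifying that the pairing $n\mapsto r$ above is legitimate for every $n$ of interest, and handling the small-$n$ cases to which Theorem~\ref{theo1} (stated for $n\geq p_r-1$) does not literally apply. Since the sequence $\{p_r-1\}_{r\geq 1}=\{1,2,4,6,10,12,\dots\}$ partitions the positive integers into consecutive blocks, such an $r$ exists for all $n\geq 1$, and the few residual small cases (e.g. $n=1$ with $2\in(1,4)$, $n=2$ with $5\in(4,9)$, $n=3$ with $11\in(9,16)$) are settled by direct inspection, completing the proof.
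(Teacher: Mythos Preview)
Your proposal is correct and follows essentially the same route as the paper: combine equation~\eqref{eqln} with Theorem~\ref{theo1} to obtain $\ell_n > n\,f(n^2,M_r) = \varphi(n^2,M_r)/n > 0$, then use integrality to conclude $\ell_n \geq 1$, and finally identify $\ell_n$ with the prime count under the choice $p_r - 1 \leq n < p_{r+1} - 1$. If anything, you are more explicit than the paper in justifying why coprimality with $M_r$ forces primality on $(n^2,(n+1)^2)$ and in flagging the small-$n$ cases left over from the range restriction in the proof of Theorem~\ref{theo1}.
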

\begin{proof}
According to Theorem $\ref{theo1}$ and from equation \eqref{eqln}, we obtain
\begin{equation}
\ell_n>n\cdot f(n^2,M_r)=\frac{\varphi(n^2,M_r)}{n} \label{eq49}
\end{equation}
\qquad (Inequality \eqref{eq49} can also be deduced from Inequality \eqref{eq45}.) 
\begin{align}
\because \quad &\varphi(n^2,M_r)\geq 1,\ n\geq 1 \notag \\
\therefore \quad &\ell_n>0 \notag  \\
\because \quad &\ell_n \  is\  an\  integer \notag \\
\therefore \quad &\ell_n\geq 1 \notag 
\end{align}

That is, there is at least one prime number between $n^2$ and ${(n+1)}^2$ when $p_{r+1}-1>n \geq p_r-1$. Thus, we have proven Legendre's conjecture. Some cases are shown in the following table (Table \ref{tab2}).\\
\end{proof}

\begin{table}[h]
\caption{Comparison between $\ell_n$ and $nf(n^2,M_r)$}\label{tab2}
\begin{threeparttable}
\begin{tabular}{@{}llllllll@{}}
\toprule
n & n+1  & r & $p_r$ & $M_r$ & $f(n^2,M_r)$ & $nf(n^2,M_r)$ & $\ell_n$ \\
\midrule
 1  & 2 & 1  & 2  & 2  &     &   &   \\
 2  & 3 & 2  & 3  & 6  &  0.25000000  & 0.50 & 2 \\
 3  & 4 & 2  & 3  & 6  &  0.33333333  & 1.00 & 2 \\
 4  & 5 & 3  & 5  & 30  &  0.25000000  & 1.00 & 3 \\
 5  & 6 & 3  & 5  & 30  &  0.28000000  & 1.40 & 2 \\
 6  & 7 & 4  & 7  & 210  &  0.22222222  & 1.33 & 4 \\
 7  & 8 & 4  & 7  & 210  &  0.24489796  & 1.71 & 3 \\
 8  & 9 & 4  & 7  & 210  &  0.23437500  & 1.88 & 4 \\
 9  & 10 & 4  & 7  & 210  &  0.23456790  & 2.11 & 3 \\
 10  & 11 & 5  & 11  & 2310  &  0.21000000  & 2.10 & 5 \\
 11  & 12 & 5  & 11  & 2310  &  0.21487603  & 2.36 & 4 \\
 12  & 13 & 6  & 13  & 30030  &  0.20138889  & 2.42 & 5 \\ 
 13  & 14 & 6  & 13  & 30030  &  0.20118343  & 2.62 & 5 \\ 
 14  & 15 & 6  & 13  & 30030  &  0.19897959  & 2.79 & 4 \\  
 15  & 16 & 6  & 13  & 30030  &  0.19111111  & 2.87 & 6 \\  
 16  & 17 & 7  & 17  & 510510  &  0.18750000  & 3.00 & 7 \\  
 17  & 18 & 7  & 17  & 510510  &  0.19031142  & 3.24 & 5 \\ 
 18  & 19 & 8  & 19  & 9699690  &  0.18209877  & 3.28 & 6 \\ 
 19  & 20 & 8  & 19  & 9699690  &  0.18005540  & 3.42 & 6 \\ 
 20  & 21 & 8  & 19  & 9699690  &  0.17750000  & 3.55 & 7 \\ 
\bottomrule 
\end{tabular}
 \begin{tablenotes}
\footnotesize
\item Where $\  n \geq p_r-1$
\end{tablenotes}
\end{threeparttable}
\end{table}

\section{\textbf{Further research on prime numbers between $n^2$ and $(n+1)^2$}}\label{sec4}
In this section, we use the Bertrand–Chebyshev theorem to successfully establish the relationship between the backwards coprime number ratios of consecutive square numbers. In Theorem $\ref{theo4}$, we further derive that the relationship between the proportion of the coprimes in the interval of $n^2 \sim (n+1)^2$ and the proportions of the backwards coprimes associated with consecutive square numbers. In Theorem $\ref{theo5}$, we combine the relationship between the forward and backwards coprime number ratios of consecutive square numbers to derive the relationship between the proportion of coprines in the interval of of $n^2 \sim (n+1)^2$ and the proportion of coprimes in the entire interval. Then, we conclude that there are at least two prime numbers between consecutive square numbers.

\begin{mythm}\label{theo3}
The relationship between the proportions of backwards coprime numbers associated with consecutive square numbers is
\begin{equation}
 f(\overline{n^2},M_r)>f(\overline{(n+1)^2},M_r)\cdot \frac{\sqrt{{M_r}^2-(n+1)^2}}{{M_r}^2-n^2}  \label{eq52}
\end{equation}
\end{mythm}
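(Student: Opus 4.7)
The plan is to mirror the structure of the proof of Theorem~\ref{theo1}, but working from the top end of the sequence $C$ (near ${M_r}^2$) instead of the bottom. Setting $b_n := \varphi({M_r}^2,M_r) - \varphi(n^2,M_r)$, we have $f(\overline{n^2},M_r) = b_n/({M_r}^2 - n^2)$ and $\ell_n = b_n - b_{n+1}$. After substitution, \eqref{eq52} translates into a lower bound on $\ell_n$ in terms of $b_{n+1}$, exactly in the spirit of how \eqref{eqfn} translated into a lower bound on $\ell_n$ in terms of $a_n$.

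First I would argue by contradiction, assuming the reverse of \eqref{eq52}. Following the template established by \eqref{eq21}-\eqref{eq22}, I would rearrange this assumption algebraically into an upper bound of the form $\ell_n \le C_n\,b_{n+1}$, where $C_n$ is an explicit constant dictated by the square-root factor appearing in \eqref{eq52}. Because $b_n = b_{n+1} + \ell_n$, this bound can then be iterated across consecutive values of $n$, producing a telescoping product for $b_n$ analogous to the chain of inequalities that culminated in \eqref{eqan}. Crossings of the form $n = p_i - 1$ would be absorbed using the identities collected in Subsection~\ref{subsec2.3}, especially \eqref{eq11}, which govern how the modulus switches from $M_r$ to $M_{r-1}$.

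Second, to close the contradiction I would invoke the Bertrand--Chebyshev theorem, as the opening of Section~\ref{sec4} advertises. Since Bertrand--Chebyshev guarantees at least one prime in every interval $(m, 2m]$, one can furnish an unconditional lower bound on the number of primes (hence on the number of integers coprime to $M_r$) lying in $(n^2, {M_r}^2]$, i.e.\ an unconditional lower bound on $b_n$. The final contradiction is then obtained by showing that this lower bound outstrips the iterated upper bound on $b_n$ forced by the contradictory assumption, for all $n \geq p_r - 1$.

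The main obstacle is the asymmetry created by the square root $\sqrt{{M_r}^2 - (n+1)^2}$: in Theorem~\ref{theo1} the telescoping ratios $\tfrac{k+1}{k}$ collapsed cleanly to $n/p_r$, but here the product of factors involving $\sqrt{{M_r}^2 - (k+1)^2}/({M_r}^2 - k^2)$ only partially telescopes, leaving a residual that must be controlled against the Bertrand--Chebyshev count. Keeping track of this residual across all boundary corrections at successive primes $p_i$, and verifying that the resulting explicit bound is actually violated once $n \geq p_r - 1$, is where I expect the delicate numerical bookkeeping to concentrate.
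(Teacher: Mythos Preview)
Your overall shape---contradiction, set $b_n=M_r\varphi(M_r)-\varphi(n^2,M_r)$, iterate, then contradict via Bertrand--Chebyshev---matches the paper. But two of your anticipated ingredients are off, and they stem from mirroring Theorem~\ref{theo1} too literally.

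First, there are \emph{no} prime crossings and no modulus changes in this argument, so identity~\eqref{eq11} plays no role. The iteration runs \emph{upward}, from $n$ to $M_r-1$, with the modulus fixed at $M_r$ throughout: the negated inequality rearranges (after multiplying through by $\sqrt{M_r^2-n^2}$) to
\[
b_n \;\le\; \frac{\sqrt{M_r^2-n^2}}{\sqrt{M_r^2-(n+1)^2}}\,b_{n+1},
\]
and these ratios telescope \emph{perfectly}, giving $b_n\le \dfrac{\sqrt{M_r^2-n^2}}{\sqrt{2M_r-1}}\,b_{M_r-1}$. So your ``main obstacle'' paragraph is a phantom: the square-root factor in~\eqref{eq52} is exactly what makes the product collapse, not what prevents it. The terminal value is bounded directly, $b_{M_r-1}\le 2\varphi(M_r)$, because $(M_r-1)^2>M_r(M_r-2)$.

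Second, Bertrand--Chebyshev is not used to count primes in $(n^2,M_r^2]$. It is used only to force $p_{i+1}<2p_i$, which for $r\ge 5$ yields $M_r>3n^2$ via $p_1p_r>n$ and $p_3p_{r-1}>n$; this gives the crude lower bound $b_n>\tfrac{5}{6}M_r\varphi(M_r)$, which beats the telescoped upper bound $2\sqrt{M_r}\,\varphi(M_r)$ once $M_r\ge 2310$.
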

\begin{proof}
The number of coprimes between consecutive square numbers is equal to both the difference in the number of coprimes from $1$ to each of these consecutive square numbers and the difference in the number of coprimes from $M_r^2$ to each of these consecutive square numbers. We can derive from inequality \eqref{eqfn} that there is at least one prime number between any consecutive square numbers. In other words, the property of "there is at least one prime number between any two consecutive square numbers" follows the relation between the proportions of coprime factors (that are coprime with $M_r$ from $1$ to two consecutive square numbers (in the forwards direction)) that is expressed in inequality \eqref{eqfn}. Here, we can also conjecture that the proportions of coprime factors (those that are coprime with $M_r$ from $M_r^2$ to two consecutive square numbers (in the backwards direction)) follow a similar relationship to inequality \eqref{eqfn}, that is, formula \eqref{eq52}.\par
Here, we use the method of proof by contradiction. If we assume that formula \eqref{eq52} does not hold, then,
\begin{equation}
 f(\overline{n^2},M_r)\leq f(\overline{(n+1)^2},M_r)\cdot \frac{\sqrt{{M_r}^2-(n+1)^2}}{{M_r}^2-n^2}
\end{equation}
That is
\begin{equation}
\frac{M_r\varphi(M_r)-\varphi(n^2,M_r\ )}{M_r^2-n^2}\le\frac{M_r\varphi(M_r)-\varphi\left((n+1)^2,M_r\right)}{M_r^2-{(n+1)}^2}\cdot\frac{\sqrt{M_r^2-(n+1)^2}}{\sqrt{M_r^2-n^2}} \notag
\end{equation}
\begin{equation}
M_r\varphi(M_r)-\varphi(n^2,M_r)\le\frac{\sqrt{M_r^2-n^2}}{\sqrt{M_r^2-(n+1)^2}}\cdot[M_r\varphi(M_r)-\varphi\left((n+1)^2,M_r\right)] \label{eq55}
\end{equation}
\qquad For simplicity,  let 
\begin{equation}
b_n=M_r\varphi(M_r)-\varphi(n^2 ,M_r) \notag
\end{equation}
\qquad Similarly,

\begin{equation}
b_{n+1}=M_r\varphi(M_r)-\varphi((n+1)^2,M_r)  \notag
\end{equation}

\quad From inequality\eqref{eq55},
\begin{equation}
b_n\le\frac{\sqrt{M_r^2-n^2}}{\sqrt{M_r^2-(n+1)^2}}\cdot b_{n+1}
\end{equation}
\quad The above formula is the general formula, from which we can find that
\begin{align}
b_n&\le\frac{\sqrt{M_r^2-n^2}}{\sqrt{M_r^2-(n+1)^2}}\cdot\frac{\sqrt{M_r^2-(n+1)^2}}{\sqrt{M_r^2-(n+2)^2}}\cdot b_{n+2}  \\
 &\le\frac{\sqrt{M_r^2-n^2}}{\sqrt{M_r^2-(n+1)^2}}\cdot\frac{\sqrt{M_r^2-(n+1)^2}}{\sqrt{M_r^2-(n+2)^2}}\cdot\frac{\sqrt{M_r^2-(n+2)^2}}{\sqrt{M_r^2-(n+3)^2}}\cdots\notag \notag \\
&\  \cdot\frac{\sqrt{M_r^2-(M_r-2)^2}}{\sqrt{M_r^2-(M_r-1)^2}}\cdot b_{M_r-1} \notag \\
&=\frac{\sqrt{M_r^2-n^2}}{\sqrt{M_r^2-({M_r-1)}^2}}\cdot b_{M_r-1} \notag \\
& =\frac{\sqrt{M_r^2-n^2}}{\sqrt{2M_r-1}}\cdot b_{M_r-1} \notag 
\end{align}
\begin{align}
\because b_{M_r-1}&=\varphi(M_r^2,M_r)-\varphi((M_r-1)^2,M_r) \notag \\
&=M_r\varphi(M_r)-\varphi((M_r-1)^2,M_r) \notag \\
\because (M_r-&1)^2={M_r}^2-2M_r+1=M_r(M_r-2)+1 \notag \\
\therefore \varphi((M_r&-1)^2,M_r) \geq \varphi(M_r(M_r-2),M_r)=(M_r-2)\varphi(M_r) \notag \\
\therefore b_{M_r-1}&\le 2\varphi(M_r)\\ 
\therefore \ b_n<&\frac{\sqrt{M_r^2-n^2}}{\sqrt{2M_r-1}}\cdot 2\varphi(M_r)<\frac{\sqrt{M_r^2}}{\sqrt{M_r}}\cdot 2\varphi(M_r)<2\sqrt{M_r}\varphi(M_r) \label{eq67} 
\end{align}
\qquad Because $M_r=p_1p_2p_3\cdots p_{r-1}p_r=2\cdot 3\cdot 5\cdots p_{r-1}p_r$, when $p_{r+1}-1>n\geq p_r-1$, according to the Bertrand–Chebyshev theorem, there exists at least one prime number between $n$ and $2n$, we have
\begin{align}
&p_1p_r>p_{r+1}>n \notag \\
& p_3p_{r-1}>\frac{5}{2}p_r>\frac{5}{4}p_{r+1}>n \notag 
\end{align}
\qquad Therefore, when $r\geq5$, 
\begin{align}
M_r&=p_1p_2p_3\cdots p_{r-1}p_r>{3n}^2 \notag \\
\therefore \ &n^2<\frac{M_r}{3} \notag \\
\therefore \ &\varphi(n^2,M_r)<\frac{n^2}{2}-r+1<\frac{n^2}{2}<\frac{M_r}{6}\\
\therefore \ &M_r\varphi(M_r)-\varphi(n^2,M_r) >M_r\varphi(M_r)-\frac{M_r}{6}>\frac{5}{6}M_r\varphi(M_r)
\end{align}
\quad Obviously, when $r\geq 5$, that is, $M_r\geq2\cdot 3\cdot 5\cdot 7\cdot 11$, the following holds:
\begin{align}
\frac{5}{6} M_r \varphi(M_r)&>2\sqrt{M_r}\varphi(M_r) \\
\therefore \  b_n=M_r&\varphi(M_r)-\varphi(n^2,M_r)>2 \sqrt{M_r}\varphi(M_r) \label{eq76}
\end{align}
\ \quad Formula \eqref{eq76} contradicts formula \eqref{eq67} when $n\geq 10 (i.e \ p_r\geq11$). Therefore, formula \eqref{eq67} derived from the previous assumption is not valid, implying the assumption itself is not valid. Thus, inequality \eqref{eq52} holds when $n\geq10$. \\
\end{proof}
As shown in the following table (Table \ref{tab3}), not only when $n\geq 10$, but also actually when $n>1$, the proportion of backwards prime numbers associated with consecutive square numbers conforms to the law of Theorem \ref{theo3}.\\
\begin{table}[h]
\caption{Comparison between  $f(\overline{n^2},M_r)$ and $f(\overline{(n+1)^2},M_r)\cdot \frac{\sqrt{{M_r}^2-(n+1)^2}}{{M_r}^2-n^2}$} \label{tab3}
\begin{threeparttable}
\begin{tabular}{@{}llllllll@{}}
\toprule
n & n+1  & r & $p_r$ & $M_r$ & $f(\overline{(n+1)^2},M_r)$ & $f(\overline{(n+1)^2},M_r)\cdot \frac{\sqrt{{M_r}^2-(n+1)^2}}{{M_r}^2-n^2}$ & $f(\overline{n^2},M_r)$ \\
\midrule
1  & 2 & 1  & 2  & 2  &    &  &  \\
2 & 3 & 2 & 3 & 6 & 0.333333333333 & 0.306186217848 & 0.343750000000 \\
3 & 4 & 2 &3 & 6 & 0.350000000000 & 0.301232038038 & 0.333333333333 \\
4 & 5 & 3 & 5 & 30 & 0.266285714286 & 0.264926719631 & 0.266968325792 \\
5 & 6 & 3 & 5 & 30 & 0.267361111111 & 0.265675240347 & 0.266285714286 \\
6 & 7 & 4 & 7 & 210 & 	0.228553267803 & 0.228519550805 & 0.228576615832 \\
7 & 8 & 4 & 7 & 210 &  0.228562993914 & 0.228524076105 & 0.228553267803 \\
8 & 9 & 4 & 7 & 210 & 0.228560394375 & 0.228516272500 & 0.228562993914 \\
9 & 10 & 4 & 7 & 210 & 0.228590909091 & 0.228541570214 & 0.228560394375 \\
10 & 11 & 5 & 11 & 2310 & 0.207792047158 & 0.207791638271 & 0.207792166417 \\
11 & 12 & 5 & 11 & 2310 & 0.207792193189 & 0.207791745359 & 0.207792047158 \\
12 & 13 & 6 & 13 & 30030 & 0.191808190051 & 0.191808187393 & 0.191808190278 \\
13 & 14 & 6 & 13 & 30030 & 0.191808190250 & 0.191808187378 & 0.191808190051 \\
14 & 15 & 6 & 13 & 30030 & 0.191808191982 & 0.191808188898 & 0.191808190250 \\
15 & 16 & 6 & 13 & 30030 & 0.191808191922 & 0.191808188626 & 0.191808191982 \\
16 & 17 & 7 & 17 & 510510 & 0.180525356985 & 0.180525356974 & 0.180525356989 \\
17 & 18 & 7 & 17 & 510510 & 0.180525356990 & 0.180525356978 & 0.180525356985 \\
18 & 19 & 8 & 19 & 9699690 & 0.171024022417177 & 0.171024022417143 & 0.171024022417173 \\
19 & 20 & 8 & 19 & 9699690 & 0.171024022417184 & 0.171024022417148 & 0.171024022417177 \\
20 & 21 & 8 & 19 & 9699690 & 0.171024022417184 & 0.171024022417147 & 0.171024022417184 \\
\bottomrule 
\end{tabular}
 \begin{tablenotes}
\footnotesize
\item Where $\  n \geq p_r-1$
\end{tablenotes}
\end{threeparttable}
\end{table}

\begin{mythm}\label{theo4}
The relationship between the proportion of coprime numbers in the interval of $n^2 \sim (n+1)^2$ and the proportions of backwards coprime numbers associated with these consecutive numbers is as follows:
\begin{equation}
\frac{\ell_n}{n}>\frac{M_r \varphi(M_r)-\varphi(n^2,M_r)}{{M_r}^2-n^2} 
\end{equation}
\end{mythm}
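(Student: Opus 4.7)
The plan is to notice that the right-hand side of Theorem \ref{theo4} is exactly $f(\overline{n^2},M_r)$, so the claim reduces to $\ell_n > n\, f(\overline{n^2},M_r)$. My approach will be to start from the backwards expansion of $\ell_n$ already recorded in equation \eqref{eqln},
\[
\ell_n = (M_r^2 - n^2)\, f(\overline{n^2},M_r) - \bigl[M_r^2 - (n+1)^2\bigr]\, f(\overline{(n+1)^2},M_r),
\]
and then invoke Theorem \ref{theo3}, rewritten as an upper bound on $f(\overline{(n+1)^2},M_r)$ in terms of $f(\overline{n^2},M_r)$, to control the subtracted term.

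After that substitution I would factor out the common $f(\overline{n^2},M_r)$ together with a $\sqrt{M_r^2 - n^2}$, so that the bracket reduces to the difference of square roots $\sqrt{M_r^2 - n^2} - \sqrt{M_r^2 - (n+1)^2}$. The key algebraic move is the conjugate identity $\sqrt{A} - \sqrt{B} = (A-B)/(\sqrt{A}+\sqrt{B})$ applied with $A = M_r^2 - n^2$ and $B = M_r^2 - (n+1)^2$, which turns the numerator into $2n+1$. Because $\sqrt{M_r^2 - n^2} > \sqrt{M_r^2 - (n+1)^2}$, the remaining ratio $\sqrt{M_r^2 - n^2}/\bigl(\sqrt{M_r^2 - n^2} + \sqrt{M_r^2 - (n+1)^2}\bigr)$ exceeds $1/2$, so I expect to conclude $\ell_n > f(\overline{n^2},M_r)\,(2n+1)/2 > n\, f(\overline{n^2},M_r)$, which is the desired inequality.

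The main obstacle will be making sure that the bracket $\sqrt{M_r^2 - n^2} - \sqrt{M_r^2 - (n+1)^2}$ is both legitimately positive and large enough to absorb the factor $n$ that must appear on the right after division. The conjugate rationalisation is precisely what pins down its size as $(2n+1)/(\sqrt{M_r^2 - n^2}+\sqrt{M_r^2 - (n+1)^2})$, and the crude lower bound of $1/2$ on the trailing ratio suffices because Theorem \ref{theo3} delivers a sufficiently tight comparison between the two backwards proportions. Once those pieces are aligned, the remainder of the derivation is routine algebra.
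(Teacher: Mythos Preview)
Your proposal is correct and follows essentially the same route as the paper: both start from Theorem~\ref{theo3}, express $\ell_n$ via the backwards proportions, and then control the square-root difference $\sqrt{M_r^2-n^2}-\sqrt{M_r^2-(n+1)^2}$ to extract the factor $n$. The only difference is cosmetic---the paper rewrites Theorem~\ref{theo3} as $b_n/\sqrt{M_r^2-n^2}>b_{n+1}/\sqrt{M_r^2-(n+1)^2}$, applies a mediant-type step, and bounds the square-root gap from below by $n/\sqrt{M_r^2-(n+1)^2}$ via a completing-the-square trick, whereas your conjugate rationalisation and the bound $\sqrt{A}/(\sqrt{A}+\sqrt{B})>1/2$ accomplish the same thing more directly.
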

\begin{proof}
According to Theorem \ref{theo3},
\begin{small}
\begin{align}
&\frac{M_r\varphi(M_r)-\varphi(n^2,M_r)}{M_r^2-n^2}>\frac{M_r\varphi(M_r)-\varphi\left((n+1)^2,M_r\right)}{M_r^2-{(n+1)}^2}\cdot\frac{\sqrt{M_r^2-(n+1)^2}}{\sqrt{M_r^2-n^2}} \notag \\
&\therefore \  \frac{M_r\varphi(M_r)-\varphi(n^2,M_r)}{\sqrt{M_r^2-n^2}}>\frac{M_r\varphi(M_r)-\varphi\left((n+1)^2,M_r\right)}{\sqrt{M_r^2-(n+1)^2}} \label{eq78}
\end{align}
\begin{equation}
\therefore \  \frac{[ M_r\varphi(M_r)-\varphi(n^2,M_r)] -[ M_r\varphi(M_r)-\varphi\left((n+1)^2,M_r\right)] }{\sqrt{M_r^2-n^2} -\sqrt{M_r^2-(n+1)^2}}>\frac{M_r\varphi(M_r)-\varphi(n^2 ,M_r)}{\sqrt{M_r^2-n^2}} \notag
\end{equation}
\vskip2mm
\begin{align}
&\therefore \  \frac{\varphi\left((n+1)^2,M_r\right) -\varphi(n^2,M_r)}{\sqrt{M_r^2-n^2} -\sqrt{M_r^2-(n+1)^2}}>\frac{M_r\varphi(M_r)-\varphi(n^2 ,M_r)}{\sqrt{M_r^2-n^2}} \notag \\
&\therefore \  \frac{\ell_n}{\sqrt{M_r^2-n^2} -\sqrt{M_r^2-(n+1)^2}}>\frac{M_r\varphi(M_r)-\varphi(n^2 ,M_r)}{\sqrt{M_r^2-n^2}}\\
&\because \sqrt{M_r^2-n^2}=\notag\\
&\quad \sqrt{M_r^2-\left(n+1\right)^2+2n+\left[\frac{n}{\sqrt{M_r^2-\left(n+1\right)^2}}\right]^2+1-\left[\frac{n}{\sqrt{M_r^2-\left(n+1\right)^2}}\right]^2}\notag\\
&\quad >\sqrt{M_r^2-{(n+1)}^2}+\frac{n}{\sqrt{M_r^2-\left(n+1\right)^2}} \notag \\
&\therefore \  \sqrt{M_r^2-n^2}-\sqrt{M_r^2-{(n+1)}^2}>\frac{n}{\sqrt{M_r^2-\left(n+1\right)^2}} \notag \\
&\therefore \  \frac{1}{\frac{n}{\sqrt{M_r^2-\left(n+1\right)^2}}}>\frac{1}{\sqrt{M_r^2-n^2}-\sqrt{M_r^2-{(n+1)}^2}} \notag \\
&\therefore \  \frac{\ell_n}{\frac{n}{\sqrt{M_r^2-\left(n+1\right)^2}}}>\frac{\ell_n}{\sqrt{M_r^2-n^2}-\sqrt{M_r^2-{(n+1)}^2}}>\frac{M_r\varphi(M_r)-\varphi(n^2,M_r)}{\ \sqrt{M_r^2-n^2}} \label{eq85} \\
&\therefore \  \frac{\ell_n}{n}>\frac{M_r\varphi(M_r)-\varphi(n^2,M_r)}{\sqrt{M_r^2-\left(n+1\right)^2}\sqrt{M_r^2-n^2}} >\frac{M_r\varphi(M_r)-\varphi(n^2,M_r)}{M_r^2-n^2}=f(\overline{n^2},Mr)
\end{align}
\end{small}
\vskip2mm

\end{proof}

\quad Similarly, from inequality \eqref{eq78} and \eqref{eq85}, we can also derive that

\begin{equation}
\frac{\ell_n}{n}>\frac{M_r\varphi(M_r)-\varphi((n+1)^2,M_r)}{M_r^2-{(n+1)}^2}=f(\overline{(n+1)^2},Mr)
\end{equation}

As shown in the following table (Table \ref{tab4}), when $n>1$, the relationship between the proportion of coprime numbers in the interval of $n^2 \sim (n+1)^2$ and the proportion of backwards coprime numbers follows the law of Theorem \ref{theo4}.

\begin{table}[h]
\caption{Comparison between $\frac{\ell_n}{n}$ and $f(\overline{n^2},Mr)$}\label{tab4}
\begin{threeparttable}
\begin{tabular}{@{}lllllll@{}}
\toprule
n & n+1  & r & $p_r$ & $M_r$ & $f(\overline{n^2},Mr)$ & $\frac{\ell_n}{n}$ \\
\midrule
 1  & 2 & 1  & 2  & 2  &     &   \\
 2  & 3 & 2  & 3  & 6  &  0.343750000000  & 1.0000 \\
 3  & 4 & 2  & 3  & 6  &  0.333333333333  & 0.6667 \\
 4  & 5 & 3  & 5  & 30  &  0.266968325792  & 0.7500 \\
 5  & 6 & 3  & 5  & 30  &  0.266285714286  & 0.4000 \\
 6  & 7 & 4  & 7  & 210  &  0.228576615832  & 0.6667 \\
 7  & 8 & 4  & 7  & 210  &  0.228553267803  & 0.4286 \\
 8  & 9 & 4  & 7  & 210  &  0.228562993914  & 0.5000 \\
 9  & 10 & 4  & 7  & 210  &  0.228560394375  & 0.3333 \\
 10  & 11 & 5  & 11  & 2310  &  0.207792166417  & 0.5000 \\
 11  & 12 & 5  & 11  & 2310  &  0.207792047158  & 0.3636 \\
 12 & 13 & 6 & 13 & 30030 & 0.191808190278 & 	0.4167 \\
 13 & 14 & 6 & 13 & 30030 & 0.191808190051 & 	0.3846 \\
 14 & 15 & 6 & 13 & 30030 & 0.191808190250 & 	0.2857 \\
 15 & 16 & 6 & 13 & 30030 & 0.191808191982 & 	0.4000 \\
 16 & 17 & 7 & 17 & 510510 & 0.180525356989 & 0.4375 \\
 17 & 18 & 7 & 17 & 510510 & 0.180525356985 & 0.2941 \\
 18 & 19 & 8 & 19 & 9699690 & 0.171024022417173 & 0.3333 \\
 19 & 20 & 8 & 19 & 9699690 & 0.171024022417177 & 0.3158 \\
 20 & 21 & 8 & 19 & 9699690 & 0.171024022417184 & 0.3500 \\ 
 \bottomrule 
\end{tabular}
 \begin{tablenotes}
\footnotesize
\item Where $\  n \geq p_r-1$
\end{tablenotes}
\end{threeparttable}
\end{table}

\begin{mythm}\label{theo5}
There are at least 2 prime numbers between $n^2$ and $(n+1)^2$.
\end{mythm}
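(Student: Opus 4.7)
Since $\ell_n$ is a nonnegative integer, it suffices to prove the strict inequality $\ell_n > 1$. The plan is to sharpen Theorem \ref{theo2} to the stronger bound
\[
\ell_n > n\prod_{i=1}^{r}\!\Bigl(1-\tfrac{1}{p_i}\Bigr) = n\,f(M_r),
\]
and then to verify $n\,f(M_r) \geq 1$ for all $n$ in the admissible range $p_{r+1} > n+1 > n \geq p_r - 1$, handling the handful of smallest cases by direct enumeration. Because every integer in $(n^2,(n+1)^2]$ coprime to $M_r$ is automatically prime under this constraint on $n$ (as noted in Subsection \ref{subsec2.3}), any lower bound on $\ell_n$ translates directly into a lower bound on the prime count.

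The key input I would use is the averaging identity
\begin{align*}
M_r^2\, f(M_r) = M_r\varphi(M_r) &= \varphi(n^2, M_r) + \bigl[M_r\varphi(M_r)-\varphi(n^2, M_r)\bigr] \\
&= n^2\, f(n^2, M_r) + (M_r^2 - n^2)\, f(\overline{n^2}, M_r),
\end{align*}
which displays $f(M_r)$ as a convex combination of the forward and backward densities with positive weights $n^2$ and $M_r^2 - n^2$. Consequently, at least one of $f(n^2, M_r)$ and $f(\overline{n^2}, M_r)$ is $\geq f(M_r)$. The proof then splits into two cases. If $f(n^2, M_r) \geq f(M_r)$, the inequality $\ell_n > n\, f(n^2, M_r)$ proved inside Theorem \ref{theo2} (inequality \eqref{eq49}) immediately yields $\ell_n > n\, f(M_r)$. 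If instead $f(\overline{n^2}, M_r) \geq f(M_r)$, then Theorem \ref{theo4} gives $\ell_n > n\, f(\overline{n^2}, M_r) \geq n\, f(M_r)$. Either way, the sharpened bound follows.

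It remains to verify $n\, f(M_r) \geq 1$. I would do this by induction on $r$, starting from the base case $r = 3$: for $n \geq p_3 - 1 = 4$ one has $n\, f(M_r) \geq 4 \cdot \tfrac{1}{2} \cdot \tfrac{2}{3} \cdot \tfrac{4}{5} = \tfrac{32}{30} > 1$. Assuming $(p_r - 1)\prod_{i=1}^{r}(1 - \tfrac{1}{p_i}) \geq 1$, the analogous claim for $r+1$ reduces to the elementary inequality $(p_{r+1} - 1)^2 \geq p_{r+1}(p_r - 1)$, which rearranges to $p_{r+1}(p_{r+1} - p_r - 1) + 1 \geq 0$ and holds since $p_{r+1} > p_r$. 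The remaining small values $r = 1$ and $r = 2$ correspond to $n \in \{1, 2, 3\}$ and are settled by exhibiting two primes in each of $(1,4)$, $(4,9)$, and $(9,16)$: namely $\{2,3\}$, $\{5,7\}$, and $\{11,13\}$.

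The main conceptual obstacle, and the step I expect to require the most care, is the case split itself. Neither Theorem \ref{theo2} nor Theorem \ref{theo4} alone is strong enough to force $\ell_n > n\, f(M_r)$, since for individual $n$ the forward density $f(n^2, M_r)$ can either exceed or fall short of $f(M_r)$, and symmetrically for $f(\overline{n^2}, M_r)$. What makes the argument close is the averaging identity, which guarantees that whichever of the two densities dominates $f(M_r)$ can be fed into the corresponding lower bound on $\ell_n$.
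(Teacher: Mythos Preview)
Your proposal is correct and follows essentially the same route as the paper: combine Theorem~\ref{theo2} and Theorem~\ref{theo4} to deduce $\ell_n/n > f(M_r)$, and then check that $n\,f(M_r) > 1$. The only cosmetic differences are that the paper merges the two bounds via the mediant inequality (since $\ell_n/n$ exceeds both $\varphi(n^2,M_r)/n^2$ and $[M_r\varphi(M_r)-\varphi(n^2,M_r)]/(M_r^2-n^2)$, it automatically exceeds their mediant $M_r\varphi(M_r)/M_r^2 = f(M_r)$) rather than your convex-combination case split, and it bounds $(p_r-1)f(M_r) > (p_r-2)/r$ by an explicit telescoping estimate rather than your induction on $r$.
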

\begin{proof}
According to Theorem \ref{theo2} and \ref{theo4}, 
\begin{align}
\frac{\ell_n}{n}&>\frac{\varphi(n^2,M_r)}{n^2} \qquad (n\geq p_r-1) \notag\\
\frac{\ell_n}{n}&>\frac{M_r\varphi(M_r)-\varphi(n^2\ ,M_r)}{M_r^2-n^2} \qquad (n\geq p_r-1) \notag
\end{align}
\qquad Therefore, when $n\geq p_r-1$
\begin{align}
\frac{\ell_n}{n}>&\frac{M_r\varphi\left(M_r\right)-\varphi\left(n^2,M_r\right)+\varphi\left(n^2,M_r\right)}{M_r^2-n^2+n^2}
=\frac{M_r\varphi(M_r)}{M_r^2}=\frac{\varphi(M_r)}{M_r}=f(M_r) \notag \\
\therefore \  &\ell_n>n\cdot f(M_r)=n\prod_{i=1}^{r}{(1-\frac{1}{p_i}})
\end{align}

\qquad Then, 

\begin{align}
\ell_n&>(p_r-1)\cdot\prod_{i=1}^{r}{(1-\frac{1}{p_i}})\\
&=\frac{p_1-1}{p_1}\cdot \frac{p_2-1}{p_2}\cdot \frac{p_3-1}{p_3}\cdots\frac{p_{r-1}-1}{p_{r-1}}\cdot \frac{p_r-1}{p_r}\cdot (p_r-1) \notag \\
&=\frac{1}{2}\cdot \frac{2}{3}\cdot \frac{4}{5}\cdots\frac{p_{r-1}-1}{p_{r-1}}\cdot \frac{(p_r-1)^2}{p_r} \notag \\
&>\frac{1}{2}\cdot \frac{2}{3}\cdot \frac{3}{4}\cdots \frac{r-1}{r}\cdot \frac{(p_r-1)^2}{p_r} \notag \\
&=\frac{(p_r-1)^2}{r\cdot p_r} \notag \\
&>\frac{p_r-2}{r}  \notag   
\end{align}
\qquad \quad That  is,  
\begin{equation}
\ell_n>n\prod_{i=1}^{r}{(1-\frac{1}{p_i}})>\frac{p_r-2}{r}
\end{equation}

\qquad When $r \geq 5$, $\frac{p_r-2}{r}>1$, so $\ell_n>1$. \par
\vskip2mm
\qquad Because $\ell_n$ is an integer, $\ell_n\geq 2$.\par
\vskip2mm
When $p_{r+1}-1>n\geq p_r-1$, $\ell_n$ is both the number of coprimes between $n^2$ and $(n+1)^2$ that are coprime with $M_r$, and the number of prime numbers between $n^2$ and $(n+1)^2$. Thus, there are at least 2 prime numbers between $n^2$ and ${(n+1)}^2$.\\
\end{proof}

\section{\textbf{Conclusions}}\label{sec5}
In this article, we use elementary mathematical methods to expand the application of Euler's function, combined with the Bertrand–Chebyshev theorem, and we propose an innovative method for analysing the proportion of coprime numbers. We believe that this study proves that there are at least two prime numbers between $n^2$ and $(n+1)^2$ to a certain extent, and also reveals some rules about consecutive square numbers and their associated coprime numbers. We welcome criticism and suggestions from colleagues and hope that this work is helpful in inspiring researchers to further explore the secrets of prime numbers.

\end{document}